\documentclass[11pt,a4paper]{article}

\usepackage[left=0.7cm, right=0.7cm, top=1.8cm, bottom=0.7cm]{geometry}

\usepackage{amsmath,amsthm,amsfonts,amssymb,amscd,cite,graphicx,epstopdf}

\usepackage[colorlinks=true, linkcolor=blue,citecolor=blue, urlcolor=blue]{hyperref}

\def\Dj{\hbox{D\kern-.73em\raise.30ex\hbox{-}
\raise-.30ex\hbox{}}}
\def\dj{\hbox{d\kern-.33em\raise.80ex\hbox{-}
\raise-.80ex\hbox{\kern-.40em}}}

\usepackage{titlesec}
\titleformat{\section}
{\normalfont\fontsize{12}{15}\bfseries}{\thesection}{1em.}{}

\titleformat{\subsection}
{\normalfont\fontsize{12}{15}\bfseries}{\thesubsection}{1em.}{}

\titleformat{\subsubsection}
{\normalfont\fontsize{12}{15}\bfseries}{\thesubsubsection}{1em.}{}

\newtheorem{corollary}{Corollary}[section]
\newtheorem{lemma}{Lemma}[section]

\newtheorem{theorem}{Theorem}[section]

\allowdisplaybreaks[4]

\let\oldbibliography\thebibliography
\renewcommand{\thebibliography}[1]{%
  \oldbibliography{#1}%
  \setlength{\itemsep}{-2pt}%
}

\baselineskip=1.20in

\begin{document}

\baselineskip=0.20in

\makebox[\textwidth]{%
\hglue-15pt
\begin{minipage}{0.6cm}	
%\vfill
\vskip9pt
\end{minipage} \vspace{-\parskip}
\begin{minipage}[t]{14.5cm}
%{\bf Contributions to Mathematics} \\ \underline{\url{www.shahindp.com/locate/cm}}
\end{minipage}
\hfill
\begin{minipage}[t]{5.50cm}% set 5.18cm for pages x-x, 5.40cm for pages x-xx, 5.50 for pages xx-xx, 5.88 for pages xx-xxx,etc.
%\normalsize {\it Contrib. Math.}  {\bf 7} (2023) 41--47\\
%DOI: 10.47443/cm.2023.008
\end{minipage}}
\vskip30pt

%\noindent
%{\it \footnotesize Research Article}\\

\begin{center}
{\large \bf \boldmath Harmonic-Arithmetic Index of (Molecular) Trees}\\[4mm]

\noindent
Abeer M. Albalahi$^1$, Akbar Ali$^{1,}\footnote{Corresponding author (akbarali.maths@gmail.com).}$, Abdulaziz M. Alanazi$^2$, Akhlaq A. Bhatti$^3$, Amjad E. Hamza$^1$\\[4mm]

\noindent
\footnotesize $^1${\it Department of Mathematics, Faculty of Science, University of Ha\!'il, Ha\!'il, Saudi Arabia}\\[2mm]
\noindent
$^2${\it Department of Mathematics, University of Tabuk, Tabuk, Saudi Arabia}\\[2mm]
\noindent
$^3${\it Department of Sciences and Humanities, National University of Computer and Emerging Sciences, Lahore, Pakistan}\\

\end{center}

%\noindent
% (\footnotesize Received: 22 February 2023. Received in revised form: 28 March 2023. Accepted: 30 March 2023. Published online: 1 April Month 2023.)\\

%\noindent
%\raisebox{0.1 em}{\copyright} 2023 the authors. This is an open access article under the CC BY (International 4.0) license (\url{www.creativecommons.org/licenses/by/4.0/}).

\setcounter{page}{1}
\thispagestyle{empty}

\baselineskip=0.20in

\normalsize

 \begin{abstract}
 \noindent
Let $G$ be a graph. Denote by $d_x$, $E(G)$, and $D(G)$ the degree of a vertex $x$ in $G$, the set of edges of $G$, and the degree set of $G$, respectively. This paper proposes to investigate (both from mathematical and applications points of view) those graph invariants of the form $\sum_{uv\in E(G)}\varphi(d_v,d_w)$ in which $\varphi$ can be defined either using well-known means of $d_v$ and $d_w$ (for example: arithmetic, geometric, harmonic, quadratic, and cubic means) or by applying a basic arithmetic operation (addition, subtraction, multiplication, and division) on any of two such means, provided that $\varphi$ is a non-negative and symmetric function defined on the Cartesian square of $D(G)$. Many existing well-known graph invariants can be defined in this way; however, there are many exceptions too. One of such uninvestigated graph invariants is the harmonic-arithmetic (HA) index, which is
obtained from the aforementioned setting by taking $\varphi$ as the ratio of the harmonic and arithmetic means of $d_v$ and $d_w$.
A molecular tree is a tree whose maximum degree does not exceed four. Given the class of all (molecular) trees with a fixed order, graphs that have the largest or least value of the HA index are completely characterized in this paper.\\[2mm]
 {\bf Keywords:} topological index; graph invariant, harmonic-arithmetic index; (molecular) tree graph.\\[2mm]
 {\bf 2020 Mathematics Subject Classification:} 05C05, 05C07, 05C09.
 \end{abstract}

\baselineskip=0.20in

\section{Introduction}

We refer the readers to the books \cite{Bondy08,Harary69,Chartrand16} for those graph-theory terms that we use in this article without defining them here.

For a graph $G$, its sets of edges and vertices are represented by $E(G)$ and $V(G)$, respectively. For a vertex $w\in V(G)$, define $N_G(w)=\{v\in V(G):~vw\in E(G)\}$. The degree $d_w$ of a vertex $w\in V(G)$ is defined as $d_w=|N_G(w)|$. A vertex of degree one is referred to as a pendent vertex. An edge incident to a pendent vertex is known as a pendent edge. The degree set $D(G)$ of $G$ is the set of all different elements of the degree sequence of $G$.

A function defined on the class of all graphs is said to be a graph invariant if its output is the same for all isomorphic graphs.
Graph invariants may be numerical quantities, polynomials, sets of numbers,  etc. Graph invariants that takes only numerical quantities are usually referred to as topological indices in chemical graph theory \cite{new2,new3}.

The topological indices that may be defined via the following formula \cite{Hollas,Gutman-13} are sometimes referred to as bond incident degree (BID) indices \cite{Vuki-CPL-10,Ali18}:
\begin{equation}\label{eq-BID-0009}
BID_\phi(G)=\sum_{vw\in E(G)} \phi(d_v,d_w),
\end{equation}
where $\phi$ is a non-negative function defined on the Cartesian square of the degree set $D(G)$ of $G$ satisfying the equation $\phi(d_v,d_w)=\phi(d_w,d_v)$. Many already introduced topological indices can be deduced from Equation \eqref{eq-BID-0009}; for example,
\begin{itemize}
  \item Equation \eqref{eq-BID-0009} yields the geometric-arithmetic index \cite{c26} when
  \[
  \phi(d_v,d_w)= \frac{2\sqrt{d_v\,d_w}}{d_v+d_w},
  \]
  \item   from Equation \eqref{eq-BID-0009}, one gets the arithmetic-geometric index (for example, see \cite{Vujo-21}\hspace{0.3mm}) by taking
  \[
  \phi(d_v,d_w)= \frac{d_v+d_w}{2\sqrt{d_v\,d_w}},
  \]
  \item the symmetric division deg (SDD) index \cite{Vuki-10} is deduced from Equation \eqref{eq-BID-0009} when
  \[
  \phi(d_v,d_w)= \frac{d_v}{d_w}+\frac{d_w}{d_v}.
  \]
\end{itemize}
This paper proposes to investigate (both from mathematical and applications points of view) those graph invariants of the form \eqref{eq-BID-0009} in which $\varphi$ can be defined either using well-known means of $d_v$ and $d_w$ (for example: arithmetic, geometric, harmonic, quadratic, and cubic means) or by applying a basic arithmetic operation (addition, subtraction, multiplication, and division) on any of two such means. Many existing well-known invariants can be defined in this way. For example:
\begin{itemize}
  \item If one takes $\phi(d_v,d_w)$ as the arithmetic mean of $d_v$ and $d_w$ then \eqref{eq-BID-0009} gives $M_1(G)/2$, where $M_1$ is the first Zagreb index (for example, see \cite{Gutman-13}\hspace{0.3mm}).
  \item If one takes $\phi(d_v,d_w)$ as the geometric mean of $d_v$ and $d_w$ then \eqref{eq-BID-0009} gives  $R_{1/2}(G)$, where $R_{1/2}$ is the reciprocal Randi\'c index \cite{Gutman-14}.
  \item If one takes $\phi(d_v,d_w)$ as the harmonic mean of $d_v$ and $d_w$ then \eqref{eq-BID-0009} gives  $2\cdot ISI(G)$, where $ISI$ is the inverse sum indeg index \cite{Vuki-10}.
  \item If one takes $\phi(d_v,d_w)$ as the quadratic mean of $d_v$ and $d_w$ then \eqref{eq-BID-0009} gives  $SO(G)/\sqrt{2}$, where $SO$ is the Sombor index  (for example, see \cite{Liu-22}\hspace{0.3mm}).
 \item If one takes $\phi(d_v,d_w)$ as the cubic mean of $d_v$ and $d_w$ then \eqref{eq-BID-0009} gives  $SO_{3}(G)/\sqrt{2}$, where $SO_{3}$ is a particular case of the $p$-Sombor index  \cite{Reti-21}.
  \item If one takes $\phi(d_v,d_w)$ as the ratio of the arithmetic and geometric means (or geometric and harmonic means) of $d_v$ and $d_w$ then \eqref{eq-BID-0009} gives  $AG(G)$.
  \item If one takes $\phi(d_v,d_w)$ as the ratio of the geometric and arithmetic means (or harmonic and geometric means) of $d_v$ and $d_w$ then \eqref{eq-BID-0009} gives  $GA(G)$.
\end{itemize}
Also, we remark here that the SDD index can be defined via the ratios of the arithmetic and harmonic means of end-vertex degrees of edges of $G$, that is
\[
SDD(G)=4\sum_{vw\in E(G)} \frac{(d_v+d_w)/2}{2d_v\,d_w/(d_v+d_w)} - 2\big| E(G)\big|.
\]
Motivated by the above-mentioned facts and from the recently introduced inverse symmetric division deg (ISDD) index \cite{Ghorbani-21}, we consider and study here the harmonic-arithmetic (HA) index, which is defined via the ratios of the harmonic and arithmetic means of end-vertex degrees of edges of $G$, that is
\[
HA(G)=\sum_{vw\in E(G)} \frac{2d_v\,d_w/(d_v+d_w)}{(d_v+d_w)/2}= \frac{4d_v\,d_w}{(d_v+d_w)^2}.
\]
A molecular tree is a tree whose maximum degree does not exceed four. Given the class of all\\
(i) trees,\\
(ii) molecular trees,\\
with a fixed order, graphs that have the largest or least value of the HA index are completely characterized in this paper.

\section{\boldmath Main results}
We start this section with the following elementary lemma.
\begin{lemma}\label{lem-0}
The function $\Phi$ defined by
\[
\Phi(x) = \frac{4x}{(x+1)^2}, \quad \text{where \ } x\ge 1,
\]
is strictly decreasing.
\end{lemma}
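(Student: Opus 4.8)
The plan is to reduce the claim to the elementary fact that $g(x):=x+\tfrac1x$ is strictly increasing on $[1,\infty)$. First I would note that $\Phi(x)>0$ for every $x\ge 1$, so it is enough to prove that the reciprocal $1/\Phi$ is strictly increasing. Expanding, $\dfrac{1}{\Phi(x)}=\dfrac{(x+1)^2}{4x}=\dfrac14\Bigl(x+2+\dfrac1x\Bigr)=\dfrac14\bigl(g(x)+2\bigr)$, so the monotonicity of $1/\Phi$ is equivalent to that of $g$.

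The second step is to verify that $g$ is strictly increasing on $[1,\infty)$. One can either take this as standard, or differentiate: $g'(x)=1-\tfrac1{x^2}$, which equals $0$ at $x=1$ and is strictly positive for $x>1$; since $g$ is continuous on $[1,\infty)$ and has positive derivative on $(1,\infty)$, it is strictly increasing on the whole closed ray. Hence $1/\Phi$ is strictly increasing on $[1,\infty)$, and therefore $\Phi$ is strictly decreasing there, which is the assertion.

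As an alternative I could argue directly with $\Phi$: the quotient rule gives $\Phi'(x)=\dfrac{4(x+1)^2-8x(x+1)}{(x+1)^4}=\dfrac{4(1-x)}{(x+1)^3}$, which is negative for $x>1$ and zero at $x=1$; by the same continuity-at-the-endpoint remark, $\Phi$ is strictly decreasing on $[1,\infty)$. I would likely present the reciprocal computation as the cleanest route and mention the derivative formula only in passing.

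There is essentially no obstacle here; the only point requiring a word of care is the left endpoint $x=1$, where the derivative vanishes. One must observe that a derivative vanishing at a single point does not destroy strict monotonicity, so the conclusion genuinely holds on the closed interval $[1,\infty)$ — which is exactly what is needed when the lemma is later applied with $x$ a ratio of vertex degrees that can equal $1$.
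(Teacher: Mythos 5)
Your proof is correct. The paper states this lemma as elementary and supplies no proof of its own, so there is nothing to compare against; both of your arguments (the reduction to the monotonicity of $x+\tfrac1x$ via the reciprocal $\tfrac{1}{\Phi(x)}=\tfrac14\bigl(x+2+\tfrac1x\bigr)$, and the direct computation $\Phi'(x)=\tfrac{4(1-x)}{(x+1)^3}\le 0$) are valid, and your remark about the vanishing derivative at the endpoint $x=1$ not affecting strict monotonicity on $[1,\infty)$ is exactly the right point of care.
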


First, we determine graphs that have the largest or least values of the HA index from the class of all trees with a given order.

\begin{theorem}\label{lem-0.5}
For every fixed integer $n\ge4$, among all trees with $n$ vertices, the star graph $S_n$ and the path graph $P_n$ uniquely possess the smallest and largest values, respectively, of the HA index, which are equal to
\[
4\left(1-\dfrac{1}{n}\right)^2 \quad \text{and} \quad n-\left(\frac{11}{9}\right).
\]
\end{theorem}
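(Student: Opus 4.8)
The plan is to establish the two extremal claims separately, in each case comparing the per-edge contribution $4d_vd_w/(d_v+d_w)^2$ with a suitable value of the one-variable function $\Phi$ from Lemma~\ref{lem-0} and then reading off when equality can occur. I would first record two elementary facts. (a)~For every edge $vw$ of an $n$-vertex tree, $d_v+d_w\le n$: the sets $N(v)\setminus\{w\}$ and $N(w)\setminus\{v\}$ are disjoint (a tree has no triangle) and avoid $v,w$, so their sizes $d_v-1$ and $d_w-1$ sum to at most $n-2$. (b)~A pendent edge contributes exactly $\Phi$ of the degree of its non-pendent end, since $4\cdot1\cdot d/(1+d)^2=\Phi(d)$.

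For the lower bound I would argue edge by edge. From $(d_v-1)(d_w-1)\ge0$ we get $d_vd_w\ge d_v+d_w-1$, so with $s:=d_v+d_w$,
\[
\frac{4d_vd_w}{(d_v+d_w)^2}\ \ge\ \frac{4(s-1)}{s^{2}}\ =\ \Phi(s-1)\ \ge\ \Phi(n-1)\ =\ \frac{4(n-1)}{n^{2}},
\]
the final inequality using fact~(a) (so $s-1\le n-1$) together with the monotonicity in Lemma~\ref{lem-0}. Summing over the $n-1$ edges yields $HA(T)\ge 4(n-1)^2/n^2=4(1-1/n)^2$, the common value being exactly $HA(S_n)$. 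For uniqueness I would run the equalities backwards: they force $(d_v-1)(d_w-1)=0$ and $d_v+d_w=n$ on \emph{every} edge, i.e.\ each edge joins a pendent vertex to a vertex of degree $n-1$; but one vertex of degree $n-1$ already uses up all $n-1$ edges of the tree, leaving every other vertex pendent, so $T=S_n$.

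For the upper bound, let $\ell\ge2$ be the number of pendent vertices; then $T$ has exactly $\ell$ pendent edges and $n-1-\ell$ non-pendent edges. By fact~(b) and Lemma~\ref{lem-0}, a pendent edge contributes $\Phi(d)\le\Phi(2)=8/9$ (its non-pendent end has degree $d\ge2$ because $n\ge4$), and a non-pendent edge contributes at most $1$ by the AM--GM inequality. Hence
\[
HA(T)\ \le\ \tfrac{8}{9}\,\ell+(n-1-\ell)\ =\ n-1-\tfrac{\ell}{9}\ \le\ n-1-\tfrac{2}{9}\ =\ n-\tfrac{11}{9}.
\]
Equality needs $\ell=2$, which forces $T$ to be a path, and then the remaining equality conditions (each leaf-neighbour of degree $2$, equal degrees across each internal edge) hold automatically for $P_n$; a direct count gives $HA(P_n)=2\cdot\frac89+(n-3)=n-\frac{11}{9}$.

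Since every step above is a short inequality or a one-line count, I do not expect a genuine obstacle. The one place that needs care is the equality analysis of the lower bound: I must make sure the two equality conditions together really pin down the star (and not merely some tree with a dominating vertex), which is precisely where the \emph{strict} decrease of $\Phi$ in Lemma~\ref{lem-0} does the work. Consistently routing both the main estimates and the uniqueness discussion through Lemma~\ref{lem-0} is what keeps the proof clean and avoids any shifting or transformation machinery.
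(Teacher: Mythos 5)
Your proof is correct and follows essentially the same route as the paper's: per-edge bounds obtained from the monotone function $\Phi$ of Lemma~\ref{lem-0}, together with the pendent/non-pendent edge split and $|E'(T)|\ge 2$ for the upper bound. The only difference is cosmetic: for the lower bound you feed $\Phi$ the argument $d_v+d_w-1$ (via $(d_v-1)(d_w-1)\ge 0$ and the triangle-free bound $d_v+d_w\le n$), whereas the paper feeds it the ratio $d_v/d_w\le n-1$; both pin down the equality case $(d_v,d_w)=(n-1,1)$ on every edge and hence $S_n$.
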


\begin{proof}
Let $T$ be a tree with $n$ vertices, where $n\ge4$. Consider an arbitrary edge $vw\in E(T)$ with the assumption that $d_v\ge d_w$. By Lemma \ref{lem-0}, one has
\[
1\ge\frac{4d_vd_w}{(d_v+d_w)^2}=\dfrac{4\left(\frac{d_v}{d_w}\right)}{\left(\frac{d_v}{d_w}+1\right)^2}\ge \dfrac{4(n-1)}{n^2}
\]
where the equation
\[
\frac{4d_vd_w}{(d_v+d_w)^2}= \dfrac{4(n-1)}{n^2}
\]
holds if and only if $\frac{d_v}{d_w}=n-1$; that is, if and only if $(d_v,d_w)=(n-1,1)$, and the equation
\[
\frac{4d_vd_w}{(d_v+d_w)^2}= 1
\]
holds if and only if $\frac{d_v}{d_w}=1$; that is, if and only if $d_v=d_w$.
Thus,
\[
HA(T)=\sum_{vw\in E(T)}\frac{4d_vd_w}{(d_v+d_w)^2}\ge \sum_{vw\in E(T)}\dfrac{4(n-1)}{n^2}= 4\left(1-\dfrac{1}{n}\right)^2.
\]
where the equation
\[
HA(T)= 4\left(1-\dfrac{1}{n}\right)^2
\]
holds if and only if $T=S_n$. Also, if $E'(T)$ denotes the set of all pendent edges of $T$ then
\begin{align*}
HA(T)&=\sum_{vw\in E(T)\setminus E'(T)}\frac{4d_vd_w}{(d_v+d_w)^2}+\sum_{v'w'\in E'(T)}\frac{4d_{v'}d_{w'}}{(d_{v'}+d_{w'})^2}\\[2mm]
&\le \sum_{vw\in E(T)\setminus E'(T)}(1)+\sum_{v'w'\in E'(T)}\left(\frac{8}{9}\right)\\[2mm]
%&=\Big(n-1-\big|E'(T)\big|\Big)+\left(\frac{8}{9}\right)\big|E'(T)\big|\\[2mm]
&=n-\left(\frac{1}{9}\right)\big|E'(T)\big|-1\\[2mm]
&\le n-\left(\frac{11}{9}\right),
\end{align*}
where the equation
\[
HA(T)= n-\left(\frac{11}{9}\right)
\]
holds if and only if $T=P_n$.
\end{proof}

From the proof of  Theorem \ref{lem-0.5}, the next result follows.

\begin{corollary}\label{lem-0.5ju-ds}
For every fixed integer $n\ge4$, among all connected graphs with $n$ vertices, only the regular graphs and the star graph $S_n$ attain the largest and least values of the HA index, respectively.
\end{corollary}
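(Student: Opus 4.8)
The plan is to recycle the two pointwise estimates already derived inside the proof of Theorem~\ref{lem-0.5}, now with ``tree'' replaced by ``arbitrary connected graph''. First I would fix a connected graph $G$ on $n\ge 4$ vertices, write $m=|E(G)|$, and record the three facts that connectivity supplies: $m\ge n-1$, every degree is at least $1$, and the maximum degree is at most $n-1$. Consequently, for any edge $vw$ with $d_v\ge d_w$ one has $1\le d_v/d_w\le n-1$, so Lemma~\ref{lem-0} yields
\[
\frac{4(n-1)}{n^2}=\Phi(n-1)\ \le\ \frac{4d_vd_w}{(d_v+d_w)^2}=\Phi\!\left(\frac{d_v}{d_w}\right)\ \le\ \Phi(1)=1,
\]
with the left-hand equality exactly when $(d_v,d_w)=(n-1,1)$ and the right-hand equality exactly when $d_v=d_w$.

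For the largest value, I would sum the bound $\dfrac{4d_vd_w}{(d_v+d_w)^2}\le 1$ over all $m$ edges to obtain $HA(G)\le m=|E(G)|$, with equality iff $d_v=d_w$ on every edge; since $G$ is connected this ``constant along edges'' condition propagates along paths to all of $V(G)$ and is therefore equivalent to $G$ being regular. Hence, among connected $n$-vertex graphs of a prescribed size, the regular ones---and only they---maximize $HA$; in particular the unconstrained maximum value $\binom{n}{2}$ over all connected $n$-vertex graphs is realized solely by the complete graph $K_n$, which is regular.

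For the least value, I would sum $\dfrac{4d_vd_w}{(d_v+d_w)^2}\ge \dfrac{4(n-1)}{n^2}$ over the $m$ edges and then use $m\ge n-1$:
\[
HA(G)\ \ge\ m\cdot\frac{4(n-1)}{n^2}\ \ge\ (n-1)\cdot\frac{4(n-1)}{n^2}=4\!\left(1-\frac1n\right)^{\!2}.
\]
Equality throughout forces $m=n-1$ (so $G$ is a tree) together with $\{d_v,d_w\}=\{1,n-1\}$ on every edge; the latter already implies that a single vertex of degree $n-1$ is adjacent to all others, each of which is then pendent, i.e.\ $G=S_n$. Thus $S_n$ is the unique connected $n$-vertex minimizer, which finishes the argument.

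The only point that needs care is the reading of the word ``largest'': the pointwise inequality gives $HA(G)\le |E(G)|$ with equality precisely for regular graphs, but $|E(G)|$ varies over the class, so one must either fix the size or note explicitly that the extreme case $|E(G)|=\binom{n}{2}$ is attained only by $K_n$. Everything else is a verbatim transcription of the tree computation in Theorem~\ref{lem-0.5}, so no new ideas are needed.
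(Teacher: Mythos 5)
Your argument is correct and is essentially the paper's own: the corollary is stated there with no separate proof ("follows from the proof of Theorem~\ref{lem-0.5}"), and what you write is exactly that proof transplanted to connected graphs, with the two extra observations ($m\ge n-1$ for the minimum, and propagation of $d_v=d_w$ along edges by connectivity for the maximum) that make the transplant rigorous. Your closing caveat is also well taken: the "largest value" clause only makes sense edge-count-relatively (i.e.\ $HA(G)\le|E(G)|$ with equality iff $G$ is regular), since otherwise the unique maximizer would be $K_n$ alone; the paper leaves this reading implicit.
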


The next result also follows immediately from Theorem \ref{lem-0.5}.

\begin{corollary}\label{lem-0.5ju}
The path graph $P_n$ uniquely attains the maximum HA index in the class of all molecular trees with $n$ vertices for every $n$\, greater than $3$.
\end{corollary}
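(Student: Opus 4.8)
The plan is to obtain this statement as an immediate specialization of Theorem~\ref{lem-0.5}. First I would record the set-theoretic observation that every molecular tree on $n$ vertices is in particular a tree on $n$ vertices; writing $\mathcal{MT}_n$ for the family of all molecular trees with $n$ vertices and $\mathcal{T}_n$ for the family of all trees with $n$ vertices, this says $\mathcal{MT}_n\subseteq\mathcal{T}_n$. Theorem~\ref{lem-0.5} already tells us that, for $n\ge 4$, the path $P_n$ is the unique element of $\mathcal{T}_n$ at which $HA$ attains its maximum over $\mathcal{T}_n$, the maximum value being $n-\frac{11}{9}$.

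The only point that needs to be spelled out is that $P_n$ itself belongs to the smaller family $\mathcal{MT}_n$. For $n>3$ the path $P_n$ has exactly two vertices of degree $1$ and all remaining vertices of degree $2$, so its maximum degree equals $2$, which does not exceed $4$; hence $P_n$ is a molecular tree and $P_n\in\mathcal{MT}_n$.

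Combining these two facts finishes the argument. Given any $T\in\mathcal{MT}_n$ with $T\neq P_n$, the inclusion $\mathcal{MT}_n\subseteq\mathcal{T}_n$ puts $T$ in $\mathcal{T}_n$, and the uniqueness clause of Theorem~\ref{lem-0.5} then gives $HA(T)<HA(P_n)=n-\frac{11}{9}$; since $P_n\in\mathcal{MT}_n$, this exhibits $P_n$ as the unique maximizer of $HA$ over $\mathcal{MT}_n$ for every $n>3$.

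I do not anticipate any genuine obstacle here: the whole content is the containment $\mathcal{MT}_n\subseteq\mathcal{T}_n$ together with the fact that the extremal tree produced by Theorem~\ref{lem-0.5} happens already to be a molecular tree, so the maximizer cannot change when the competition is restricted to the sub-family. If one preferred not to cite Theorem~\ref{lem-0.5} as a black box, the alternative would simply be to re-run the relevant half of its proof, namely the pendent-edge bound $HA(T)\le n-\frac{1}{9}\,|E'(T)|-1\le n-\frac{11}{9}$ with equality if and only if $T=P_n$, but that would be strictly redundant in this context.
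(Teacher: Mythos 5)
Your argument is correct and is exactly the paper's intended one: the corollary is stated there as following immediately from Theorem~\ref{lem-0.5}, precisely because molecular trees form a subclass of all trees and $P_n$ (having maximum degree $2$) lies in that subclass. Nothing further is needed.
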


Moreover, for $n=4,5,$ the minimal version of Corollary \ref{lem-0.5ju} can be obtained from Theorem \ref{lem-0.5}, where the star graph is the extremal tree. In the rest of this paper, we focus our attention to the problem of determining graphs having the least value of the HA index from the class of all molecular trees of  fixed order $n\ge6$.\\
For a graph $G$, let \[n_t=\big|\{u\in V(G): d_u=t\}\big|\] and \[m_{s,t}=m_{t,s}=\big|\{uv\in E(G): (d_u,d_v)=(s,t)\}\big|.\]

\noindent
If $T$ is a non-trivial molecular tree of order $n$, then
\begin{equation}\label{MT-Eq-1}
HA(T)=\sum_{1\,\le\, s\, \le\, t\, \le\, 4}   \frac{4s\,t}{(s+t)^2}\,m_{s,t}\,,
\end{equation}
\begin{equation}\label{MT-Eq-2}
n_1 + n_2 + n_3 + n_4=n\,,
\end{equation}
\begin{equation}\label{MT-Eq-3}
n_1 + 2n_2 + 3n_3 + 4n_4=2(n-1)\,,
\end{equation}
\begin{equation}\label{MT-Eq-4}
%\boxed{
\sum_{ \substack{ 1\,\leq\, s\,\leq\, 4 \\
         s\,\neq\, t}}m_{s,t}+2m_{t,t}=t \cdot n_{t}\quad \text{for $t=1,2,3,4$.}
%m_{1,2} + m_{1,3} + m_{1,4} = n_1\,,
%}
\end{equation}
By solving \eqref{MT-Eq-2}--\eqref{MT-Eq-4} for $n_{1},n_{2},n_{3},n_4, m_{1,4},m_{4,4},$ and then plugging the values
of $m_{1,4}$ and $m_{4,4}$ into \eqref{MT-Eq-1}, one obtains
\begin{align}\label{MT-Eq-5}
HA(T)&= \frac{19 n-31}{25}
               +\left(\frac{83}{225}\right)m_{1,2}
          +\left(\frac{3}{20}\right)m_{1,3}
          +\left(\frac{6}{25}\right)m_{2,2}\nonumber\\[4mm]
&
         \quad \  +\left(\frac{3}{25}\right)m_{2,3}
          +\left(\frac{2}{225}\right)m_{2,4}
         +\left(\frac{2}{25}\right)m_{3,3}
         +\left(\frac{24}{1225}\right)m_{3,4}\,.
\end{align}
We take
\begin{align}\label{MT-Eq-6}
\Gamma_{\!\! HA}(T)&= \left(\frac{83}{225}\right)m_{1,2}
          +\left(\frac{3}{20}\right)m_{1,3}
          +\left(\frac{6}{25}\right)m_{2,2}+\left(\frac{3}{25}\right)m_{2,3} +\left(\frac{2}{225}\right)m_{2,4}
         +\left(\frac{2}{25}\right)m_{3,3}
         +\left(\frac{24}{1225}\right)m_{3,4}\nonumber\\[4mm]
&\approx  0.369m_{1,2} +0.150m_{1,3} +0.240m_{2,2} +0.120m_{2,3}
 +0.009m_{2,4} +0.080m_{3,3} +0.020m_{3,4}\,.
\end{align}
Then, \eqref{MT-Eq-5} yields
\begin{equation}\label{MT-Eq-7}
HA(T)= \frac{19 n-31}{25} + \Gamma_{\!\! HA}(T)\,.
\end{equation}

\begin{lemma}\label{MT-Lem-1}
For a molecular tree $T$, if either
\begin{description}
  \item[(i).] $\max\{m_{1,2}, m_{1,3},m_{2,2}, m_{3,3},m_{2,3}\}\ge1$, or
  \item[(ii).] $n_3\ge1$ and $n_2\ge3$,
\end{description}
then
$$
\Gamma_{\!\! HA}(T) > \frac{8}{225}~~
(\approx 0.036)\,.
$$

\end{lemma}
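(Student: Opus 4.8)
The plan is to handle the two hypotheses separately, using the explicit linear expression \eqref{MT-Eq-6} for $\Gamma_{\!\! HA}(T)$. For part (i), observe that each of the five quantities $m_{1,2},\,m_{1,3},\,m_{2,2},\,m_{3,3},\,m_{2,3}$ carries, in \eqref{MT-Eq-6}, a coefficient that is strictly larger than $8/225$: indeed $83/225 > 8/225$, $3/20 = 33.75/225 > 8/225$, $6/25 = 54/225 > 8/225$, $2/25 = 18/225 > 8/225$, and $3/25 = 27/225 > 8/225$. Since all seven $m_{s,t}$ appearing in \eqref{MT-Eq-6} are non-negative, dropping all terms except the one that is assumed to be at least $1$ already gives $\Gamma_{\!\! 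HA}(T)$ at least that term's coefficient, hence strictly more than $8/225$. So part (i) reduces to a one-line coefficient comparison.

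For part (ii), I would argue that if $n_3 \ge 1$ and $n_2 \ge 3$ then in fact one of the ``cheap'' edge-types in hypothesis (i) must be forced, or else $m_{2,4}$ and $m_{3,4}$ alone already exceed the bound — but the cleaner route is to show directly that hypothesis (ii) forces $\max\{m_{1,2},m_{1,3},m_{2,2},m_{2,3},m_{3,3}\}\ge 1$, so that part (i) applies. Concretely: take a vertex $u$ with $d_u=3$ and a set $S$ of three vertices of degree $2$. A degree-$3$ vertex has three incident edges, each going to a vertex of degree $1,2,3,$ or $4$; if any of its neighbours has degree $1,2,$ or $3$ we are done (we get an edge of type $(1,3)$, $(2,3)$, or $(3,3)$), so we may assume all three neighbours of $u$ have degree $4$. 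Similarly, each of the three degree-$2$ vertices in $S$ has two incident edges; if any neighbour of such a vertex has degree $1,2,$ or $3$ we obtain an edge of type $(1,2)$, $(2,2)$, or $(2,3)$ and are done, so we may assume every neighbour of every vertex in $S$ has degree $4$. The hard part is ruling out the remaining configuration in which every degree-$2$ and degree-$3$ vertex is surrounded exclusively by degree-$4$ vertices; here I would use the tree structure and equations \eqref{MT-Eq-2}--\eqref{MT-Eq-3} to derive a contradiction, the key point being that a tree cannot consist solely of a ``core'' of degree-$4$ vertices with degree-$2$ and degree-$3$ vertices pendant-like attached to it without creating some low-degree adjacency, since the leaves (degree $1$) must attach somewhere and the counting forces an edge of one of the forbidden-for-this-case types. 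Should that direct combinatorial argument prove awkward, the fallback is the purely arithmetic one: assume for contradiction that $\Gamma_{\!\! HA}(T)\le 8/225$; then by the coefficient bounds from part (i) we must have $m_{1,2}=m_{1,3}=m_{2,2}=m_{2,3}=m_{3,3}=0$, so every degree-$2$ vertex has both neighbours of degree $4$ (contributing $2n_2$ to $m_{2,4}$) and every degree-$3$ vertex has all neighbours of degree $4$ (contributing $3n_3$ to $m_{3,4}$), whence $m_{2,4}\ge 2n_2 \ge 6$ and $m_{3,4}\ge 3n_3 \ge 3$, giving
\[
\Gamma_{\!\! HA}(T) \ge \frac{2}{225}\cdot 6 + \frac{24}{1225}\cdot 3 = \frac{12}{225} + \frac{72}{1225} > \frac{8}{225},
\]
a contradiction.

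I expect the only genuine obstacle to be making the part-(ii) reduction airtight: one must be careful that "neighbour of a degree-$2$ vertex" could itself be one of the other degree-$2$ vertices in $S$, but that is fine because it still produces a $(2,2)$-edge and hence lands us in case (i); and one must make sure the three degree-$2$ vertices together with the degree-$3$ vertex cannot all be mutually non-adjacent while each being adjacent only to degree-$4$ vertices in a way that is consistent with $T$ being a tree on $n$ vertices — but the arithmetic fallback above sidesteps this entirely and is short, so I would present that as the main argument for (ii) and relegate the combinatorial description to a remark. Throughout, the non-negativity of every $m_{s,t}$ and the fact that all relevant coefficients in \eqref{MT-Eq-6} are positive are what make the estimates go through.
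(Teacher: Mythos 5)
Your proposal is correct and, once you discard the tentative combinatorial detour, the argument you settle on for part (ii) — forcing $m_{1,2}=m_{1,3}=m_{2,2}=m_{2,3}=m_{3,3}=0$, deducing $m_{2,4}=2n_2$ and $m_{3,4}=3n_3$ from the degree relations, and computing $\tfrac{12}{225}+\tfrac{72}{1225}>\tfrac{8}{225}$ — is exactly the paper's proof, as is the coefficient comparison for part (i). No substantive difference.
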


\begin{proof}
We note that if any of the integers $m_{1,2},m_{2,2},m_{1,3},m_{3,3}, m_{2,3},$ is non-zero,
then the desired conclusion holds by \eqref{MT-Eq-6}.
Assume that $m_{1,2}=m_{1,3}=m_{2,2}= m_{2,3}=m_{3,3}=0$, $n_3\ge1$, and $n_2\ge3$.
By using \eqref{MT-Eq-4}, one has $m_{2,4}=2n_2$ and $m_{3,4}=3n_3$. Now, from \eqref{MT-Eq-6} it follows that
\begin{align}\label{MT-Eq-9}
\Gamma_{\!\! HA}(T)&= \left(\frac{4}{225}\right)n_{2}
          +\left(\frac{72}{1225}\right)n_{3}> \frac{8}{225}\,,\nonumber
\end{align}
as desired.
\end{proof}

\begin{theorem}
For $n\ge6$, among all molecular trees of order\, $n$,
\begin{description}
  \item[(i).] the trees containing no vertex of degree $3$ and containing exactly one vertex of degree $2$, which is adjacent to two vertices of degree $4$, are the only trees with the minimum HA index and that minimum value is equal to
      \[
      \frac{19 n-31}{25}+\frac{4}{225}\,,
      \]
      whenever $n\equiv0\pmod{3}$ with $n\ne6;$
  \item[(ii).] the trees containing no vertex of degree $3$ and containing exactly two vertices of degree $2$, each of which is adjacent to two vertices of degree $4$, are the only trees with the minimum HA index and that minimum value is
      \[
      \frac{19 n-31}{25}+\frac{8}{225}\,,
      \]
      whenever $n\equiv1\pmod{3}$ with $n\not\in\{7,10\};$
  \item[(iii).] the trees containing neither any vertex of degree $2$ nor any vertex of degree $3$ are the only trees with the minimum HA index and that minimum value is
      \[
      \frac{19 n-31}{25}\,,
      \]
      whenever $n\equiv2\pmod{3}$.
\end{description}
If $n=6,7,10,$ then the unique tree with the minimum HA index among all molecular trees of order $n$\, is depicted in Part (a), Part (b), Part (c), respectively, of Figure \ref{Fig-1}.

\end{theorem}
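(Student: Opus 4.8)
The plan is to work with the identity $HA(T)=\frac{19n-31}{25}+\Gamma_{\!\!HA}(T)$ from \eqref{MT-Eq-7}, so that finding the minimum of $HA$ over molecular trees of order $n$ amounts to minimizing the quantity $\Gamma_{\!\!HA}$ of \eqref{MT-Eq-6}, all of whose coefficients are positive. First I would observe that $\Gamma_{\!\!HA}(T)=0$ holds exactly when $m_{1,2}=m_{1,3}=m_{2,2}=m_{2,3}=m_{2,4}=m_{3,3}=m_{3,4}=0$, and that by the handshake-type relations \eqref{MT-Eq-4} this is equivalent to $n_2=n_3=0$. Subtracting \eqref{MT-Eq-2} from \eqref{MT-Eq-3} gives $n_2+2n_3+3n_4=n-2$, so a molecular tree with $n_2=n_3=0$ exists only when $n\equiv2\pmod 3$; and for every such $n$ (here $n\ge6$, hence $n\ge8$) I would display one --- for instance a caterpillar whose spine is a path on $\frac{n-2}{3}$ vertices, with three pendent vertices attached to each of the two spine endpoints and two to every internal spine vertex, so that every non-leaf has degree exactly $4$. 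This disposes of Part (iii): the minimizers are precisely the molecular trees having no vertex of degree $2$ or $3$, with minimum value $\frac{19n-31}{25}$.

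When $n\not\equiv2\pmod 3$ we have $(n_2,n_3)\ne(0,0)$, hence $\Gamma_{\!\!HA}(T)>0$, and the task is to pin down its exact minimum. Here I would split according to whether the hypothesis of Lemma \ref{MT-Lem-1} is met. If it is --- that is, if $\max\{m_{1,2},m_{1,3},m_{2,2},m_{3,3},m_{2,3}\}\ge1$ or ($n_3\ge1$ and $n_2\ge3$) --- then $\Gamma_{\!\!HA}(T)>\frac{8}{225}$, which already strictly exceeds both target values. In the complementary case $m_{1,2}=m_{1,3}=m_{2,2}=m_{2,3}=m_{3,3}=0$, so \eqref{MT-Eq-4} forces $m_{2,4}=2n_2$ and $m_{3,4}=3n_3$, and \eqref{MT-Eq-6} collapses to $\Gamma_{\!\!HA}(T)=\frac{4}{225}n_2+\frac{72}{1225}n_3$ subject to the side condition ``$n_3=0$ or $n_2\le2$''. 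Reducing $n_2+2n_3+3n_4=n-2$ modulo $3$ gives $n_2+2n_3\equiv n-2\pmod 3$, which leaves only finitely many candidate pairs $(n_2,n_3)$ to compare; a direct check shows that for $n\equiv0\pmod3$ the minimum of $\frac{4}{225}n_2+\frac{72}{1225}n_3$ over the admissible pairs is $\frac{4}{225}$, attained uniquely at $(n_2,n_3)=(1,0)$, and for $n\equiv1\pmod3$ it is $\frac{8}{225}$, attained uniquely at $(n_2,n_3)=(2,0)$. Unwinding the conditions that were forced along the way --- $n_3=0$, the prescribed value of $n_2$, and the fact that each degree-$2$ vertex must then have both of its neighbours of degree $4$ (automatic once $m_{1,2}=m_{2,2}=m_{2,3}=0$ and $n_3=0$) --- produces exactly the structural descriptions in Parts (i) and (ii), together with the stated minimum values.

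Finally I would certify that a minimizer of the asserted shape actually exists for every admissible order, and settle the three exceptions. For $n\equiv0\pmod3$ with $n\ge9$ one takes the caterpillar above on $\frac{n-3}{3}$ degree-$4$ spine vertices and subdivides the spine edge at one end, the inserted vertex becoming the unique degree-$2$ vertex, adjacent to two degree-$4$ vertices; for $n\equiv1\pmod3$ with $n\ge13$ one instead subdivides two suitably chosen spine edges (readjusting the pendent counts so every remaining non-leaf stays degree $4$). The orders $n=6$ and $n=7,10$ are genuinely exceptional, because there the value of $n_4$ that the generic pattern would require is too small to realize it --- for $n=10$ two such subdivisions would join the two degree-$4$ vertices into a $4$-cycle --- so for these I would simply enumerate the few feasible degree sequences via \eqref{MT-Eq-2}--\eqref{MT-Eq-4} and the resulting trees, evaluate \eqref{MT-Eq-5}, and read off the unique minimizer pictured in Figure \ref{Fig-1}. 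I expect the real work to lie not in any single inequality --- Lemma \ref{MT-Lem-1} already carries that weight --- but in the bookkeeping: verifying the $(n_2,n_3)$-case analysis is exhaustive, that the ``automatic'' local-structure claims are genuinely forced, and that the explicit caterpillar constructions are valid molecular trees across the stated ranges of $n$.
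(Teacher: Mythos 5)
Your proposal is correct and follows essentially the same route as the paper: reduce via \eqref{MT-Eq-7} to minimizing $\Gamma_{\!\! HA}$, eliminate most trees with Lemma \ref{MT-Lem-1}, settle the rest through the congruence $n_2+2n_3\equiv n-2\pmod 3$, and treat $n=6,7,10$ as exceptions. A few execution-level differences are worth noting: your explicit minimization of $\frac{4}{225}n_2+\frac{72}{1225}n_3$ over all admissible pairs $(n_2,n_3)$ is in fact more complete than the paper's Case~2 (which silently restricts to $n_3=0$ and omits the sub-case where all of $m_{1,2},m_{1,3},m_{2,2},m_{2,3},m_{3,3}$ vanish but $n_3\ge1$ and $n_2\le2$), and your caterpillar constructions supply the existence statements, plus the neat ``$4$-cycle'' explanation of why $n=10$ is exceptional, that the paper leaves implicit; on the other hand, for $n=10$ your plan to ``enumerate the few feasible degree sequences and the resulting trees'' substantially understates the work --- there are ten feasible degree sequences and dozens of molecular trees of order $10$, which is precisely why the paper replaces enumeration there with a step-by-step structural elimination (first $m_{1,2},m_{2,2}$, then $m_{2,3}$, then $n_2$, then $m_{3,3}$) to isolate the tree of Figure \ref{Fig-1}(c) with $\Gamma_{\!\! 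HA}=\frac{927}{4900}$; your version remains a valid finite check, so this is a matter of labor rather than a gap.
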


\begin{figure}[!ht]
 \centering
  \includegraphics[width=0.6\textwidth]{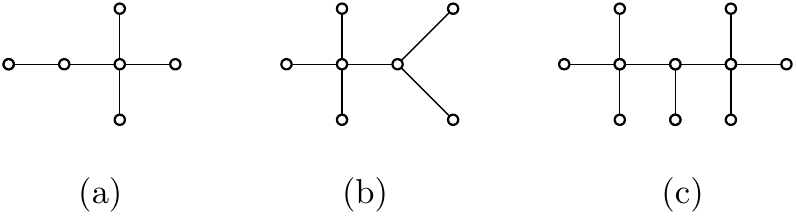}
   \caption{The trees with the minimum $HA$ value among all molecular trees of order (a) $n=6$, (b) $n=7$, (c) $n=10$.}
    \label{Fig-1}
     \end{figure}

\begin{proof}
First, we assume that $n\ge8$ and $n\ne10$. Let $T$ be a molecular tree of order $n$.\\[2mm]
{\bf Case 1.} Either $\max\{m_{1,2}, m_{1,3},m_{2,2},m_{3,3},m_{2,3}\}\ge1$ or $n_2\ge3$ and  $n_3\ge1$.\\
In this case, from Lemma \ref{MT-Lem-1} and Equation \eqref{MT-Eq-7}, it follows that
\begin{align*}
HA(T)&> \frac{19 n-31}{25} + \frac{8}{225}\\[4mm]
&> \frac{19 n-31}{25} + \frac{4}{225}\\[4mm]
&> \frac{19 n-31}{25},
\end{align*}
as desired.\\[2mm]
{\bf Case 2.} It holds that $m_{1,2}= m_{2,2}=n_3=0$ and $n_2\le2$.\\
By using \eqref{MT-Eq-2} and  \eqref{MT-Eq-3}, one has $n_2\equiv n-2\pmod{3}$,
which yields
$$
n_2=
\begin{cases}
1 & \text{when $n\equiv0\pmod{3}$,}\\[2mm]
2 & \text{when $n\equiv1\pmod{3}$,}\\[2mm]
0 & \text{when $n\equiv2\pmod{3}$.}
\end{cases}
$$
Thus, from \eqref{MT-Eq-4}, one has
\[m_{2,4}=
\begin{cases}
2 & \text{when $n\equiv0\pmod{3}$,}\\[2mm]
4 & \text{when $n\equiv1\pmod{3}$,}\\[2mm]
0 & \text{when $n\equiv2\pmod{3}$.}
\end{cases}
\]
The desired conclusion now follows \eqref{MT-Eq-5}.

\begin{figure}[!ht]
 \centering
  \includegraphics[width=0.95\textwidth]{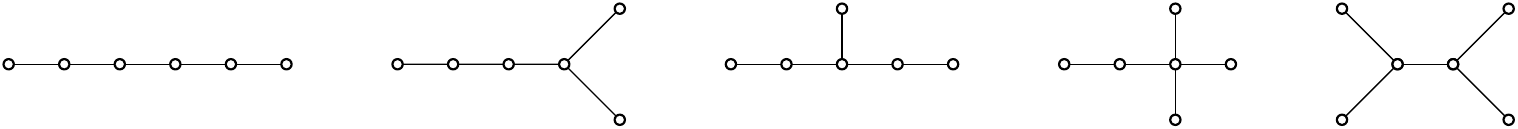}
   \caption{All the molecular trees of order $6$.}
    \label{Fig-2}
     \end{figure}

It remains to prove the result for $n=6,7,10$. Note that, by Corollary \ref{lem-0.5ju}, the inequality $HA(T)<HA(P_n)$ holds for every molecular tree $T$, different from the path graph $P_n$, of order $n\ge4$.

All the molecular trees of order $6$ are depicted in Figure \ref{Fig-2}; among all of them, the tree shown in Figure \ref{Fig-1}(a) has the minimum $\Gamma_{\!\! HA}$ value because for every other tree different from the path graph, the inequality $m_{1,2}+m_{1,3}\ge3$ holds and thence from \eqref{MT-Eq-5} and \eqref{MT-Eq-6} the desired conclusion follows for $n=6$.

\begin{figure}[!ht]
 \centering
  \includegraphics[width=0.95\textwidth]{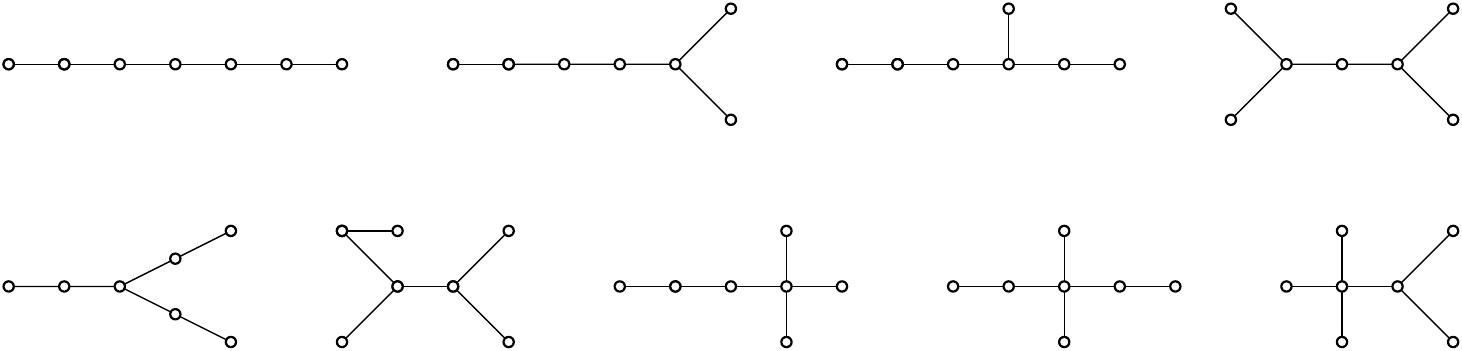}
   \caption{All the molecular trees of order $7$.}
    \label{Fig-3}
     \end{figure}

All the molecular trees of order $7$ are depicted in Figure \ref{Fig-3}; among all of them, the tree shown in Figure \ref{Fig-1}(b) has the minimum $\Gamma_{\!\! HA}$ value because for every other tree, at least one of the inequalities $\min\{m_{1,2},m_{2,2}\}\ge1$, $m_{1,2}\ge2$, $m_{1,3}\ge3$, holds and thence from \eqref{MT-Eq-5} and \eqref{MT-Eq-6} the desired conclusion follows for $n=7$.

Finally, in what follows, we assume that $n=10$. We claim that for any tree $T$ of order $10$ different from the one depicted in Figure \ref{Fig-1}(c), the following inequality
\begin{equation}\label{eq-09jjjhy9}
\Gamma_{\!\! HA}(T)> \frac{927}{4900}~~(\approx 0.189)
\end{equation}
holds.
If any of the three inequalities $\max\{m_{1,2},m_{2,2}\}\ge1$, $\max\{m_{2,3},m_{1,3}\}\ge2$, $m_{3,3}\ge3$, holds then from \eqref{MT-Eq-6}, the inequality \eqref{eq-09jjjhy9} follows. Thus, in the following, assume that $m_{1,2}=m_{2,2}=0$, $\max\{m_{2,3},m_{1,3}\}\le1$, and $m_{3,3}\le2$.

If $m_{2,3}=1$. Let $uv\in E(T)$ be the edge satisfying $(d_u,d_v)=(2,3)$. Then $N_T(u)=\{v,w\}$, with $w$ being a neighbor of degree $4$, because $m_{1,2}=m_{2,2}=0$. Consequently, none of the neighbors of $v$ has degree $4$ because $n=10$, and hence \eqref{eq-09jjjhy9} follows from  \eqref{MT-Eq-6}. Thence, in what follows, assume that $m_{1,2}=m_{2,2}=m_{2,3}=0$, $m_{1,3}\le1$, and $m_{3,3}\le2$. From $m_{1,2}=m_{2,2}=m_{2,3}=0$, it follows that if $T$ contains any vertex of degree $2$ then each of its neighbors has degree $4$; but $n=10$, which implies that $m_{2,4}=n_2=0$.
In the remaining proof, it is assumed that $n_2=0$, $m_{1,3}\le1$, and $m_{3,3}\le2$.

If $m_{3,3}\ne0$, let $x\in V(T)$ be a vertex of degree $3$ having at least one neighbor of degree $3$. Since $n=10$, $m_{3,3}\le2$, and $n_2=0$, the set $N_T(x)$ contains at least one vertex of degree $1$. Then \eqref{eq-09jjjhy9} holds by \eqref{MT-Eq-6}.

Finally, we assume that $n_2=m_{3,3}=0$ and $m_{1,3}\le1$. Since $n=10$, it holds that $n_3\ge1$ and $m_{1,3}=1$, which implies that $m_{3,4}=2$ and hence $T$ is the tree depicted in Figure \ref{Fig-1}(c). Therefore, $\Gamma_{\!\! HA}(T)= \frac{927}{4900}$.

Now, the desired result for $n=10$ follows from \eqref{MT-Eq-7}.
\end{proof}

\section{Concluding remarks}
A new graph invariant, namely the harmonic-arithmetic (HA) index, has been proposed and studied in this paper. One of the motivations for introducing the HA index came from the fact that many existing BID indices can be defined using well-known means; for example, arithmetic, geometric, harmonic, quadratic, and cubic means. Given the class of all (molecular) trees with a fixed order, graphs that have the largest or least value of the HA index have completely been characterized in this paper.

In \cite{Ghorbani-21}, the absolute values of the correlation coefficient between the ISSD index and
(i) entropy, (ii) acentric factor, for octane isomers were reported as (i) 0.87, (ii) 0.89, respectively. As one of the motivations behind proposing and studying the HA index is the ISDD index, we compute the absolute values of the correlation coefficient between the HA index and
(i) entropy, (ii) acentric factor, for octane isomers as (i) 0.91, (ii) 0.92, respectively. This indicates that the HA index performs slightly better than the ISDD index in predicting the aforementioned properties of octane isomers.
The experimental values for the mentioned properties of octane isomers are available at \\ [1mm] \url{https://web.archive.org/web/20180912171255if\_/http://www.moleculardescriptors.eu/index.htm}\\[1mm]
\indent
If one takes $\phi(d_v,d_w)$ as the ratio between quadratic mean and geometric mean then \eqref{eq-BID-0009} gives
\[\sum_{vw\in E(G)}\sqrt{\frac{1}{2}\left(\frac{d_v}{d_w}+\frac{d_w}{d_v}\right)},\]
which is certainly another variant of the SDD index and hence can be referred to as the ``modified symmetric division deg index''. It seems to be interesting also to investigate the chemical applicability and mathematical aspects of the modified symmetric division deg index.

\section*{Acknowledgement}

This work is partially supported by Scientific Research Deanship, University of Ha\!'il, Saudi Arabia, through project numbers RG-22\,002 and RG-22\,005.

%%%%%%%%%%%%%%%%%%%%%%%%%%%%%%%%%%%%%%%%%%%%%%%%%%%%%%%%%%%%%%%%%%%%
\scriptsize

\end{document}